\documentclass[10pt,twoside]{IEEEtran}
\usepackage {amssymb,rotating,graphicx,cite, calc, color, psfrag}
\usepackage{authblk}

\usepackage[cmex10]{amsmath}
\usepackage{amssymb,amsthm,amsfonts,mathrsfs}
\usepackage{graphicx}
\usepackage{enumitem,stackrel}
\usepackage{color}
\usepackage{mathtools,amssymb,bm}
\usepackage{amstext}
\usepackage{array}
\usepackage{algorithmicx}
\usepackage{hyperref}
\theoremstyle{remark}

\ifCLASSOPTIONcaptionsoff
  \usepackage[nomarkers]{endfloat}
 \let\MYoriglatexcaption\caption
 \renewcommand{\caption}[2][\relax]{\MYoriglatexcaption[#2]{#2}}
\fi
\newcommand{\RN}[1]{%
	\textup{\uppercase\expandafter{\romannumeral#1}}%
}


\newtheoremstyle{mystyle}
  {}
  {}
  {\itshape}
  {}
  {\bfseries}
  {.}
  { }
  {}
\theoremstyle{mystyle}

{}
\newtheorem{example}{Example}{}
{}

\newtheorem{defi}{Definition}{}
{}
\usepackage{epstopdf}
\newtheorem{theorem}{Theorem}

\begin{document}
%
\title{Haplotype Assembly Using Manifold Optimization and Error Correction Mechanism}
\author{Mohamad Mahdi~Mohades, Sina~Majidian, and Mohammad Hossein~Kahaei \thanks{The authors are with the School of Electrical Engineering, Iran University of Science \& Technology, Tehran 16846-13114, Iran (e-mail: mohamad\_mohaddes@elec.iust.ac.ir; s\_majidian@elec.iust.ac.ir; kahaei@iust.ac.ir).}}%

\maketitle

\begin{abstract}
Recent matrix completion based methods have not been able to properly model  the Haplotype Assembly  Problem (HAP)  for noisy observations. To deal with such cases, we propose a new Minimum Error Correction (MEC) based matrix completion problem over the manifold of rank-one matrices. We then prove the convergence of a specific iterative algorithm to solve this problem. From the simulation results, the proposed method not only outperforms some well-known  matrix completion based methods, but also shows a more accurate result compared to  a most recent MEC based algorithm for haplotype estimation.
\end{abstract}

\begin{IEEEkeywords}
Haplotype assembly, manifold optimization, matrix completion, minimum error correction, Hamming distance.
\end{IEEEkeywords}

%
\IEEEpeerreviewmaketitle

\section{Introduction}\label{sec:Introduction}
\IEEEPARstart{S}{tudy} of genetic variations is a critical task to disease recognition and also drug development. Such variations can be recognized using haplotypes which are known as strings of Single Nucleotide Polymorphisms (SNPs) of chromosomes \cite{Schwartz_2010}. However, due to hardware limitations, haplotypes cannot be entirely measured. Instead, they are supposed to be reconstructed from the short reads obtained by high-throughput sequencing systems.
For diploid organisms, each SNP site is modeled by either $+1$ or $-1$. Hence, a haplotype sequence $\bf{h}$ corresponding to one of chromosomes is formed by a vector of $\pm 1$ entries, while the haplotype of the other one is given by  $-\bf{h}$. Accordingly, the short reads are obtained from either $\bf{h}$ or $-\bf{h}$.
 Such a sampling procedure can be modeled by a matrix, say ${\bf{R}}$, whose entries are either $\pm 1$ or unobserved. Specifically, if we denote the set of observed entries by $\Omega$, the entries of ${\bf{R}}\in{\left\{-1,+1,\times\right\}}^{m\times n}$ are given by
\begin{equation} \label{Elements_of_Main_Matrix}
\left\{ {\begin{array}{*{20}{c}}
{{r_{ij}} =  \pm 1\,\,\,\,\,\,\left( {i,j} \right) \in \Omega }\\
{{r_{ij}} =  \times \,\,\,\,\,\,\left( {i,j} \right) \notin \Omega , }
\end{array}} \right.
\end{equation}
where $\times$ shows unobserved entries.
The completed matrix of $\bf{R}$, defined by $\overline{\bf{R}}$, is supposed to be a rank-one matrix whose entries are $\pm1$ and can be factorized as
\begin{equation} \label{Factorization_of_Main_Matrix}
\overline{\bf{R}}={\bf{c}}_{m\times 1}{\bf{h}}_{n\times 1}^T,
\end{equation}
where ${\bf{h}}$ is the intended haplotype sequence and {\bf{c}} is a sequence of $+1$  and  $-1$ entries corresponding to ${\bf{h}}$ and $-{\bf{h}}$, respectively. When the number of reads (observed entries)  are sufficient and there is no error in reads, it is easy to obtain $\overline{\bf{R}}$ and consequently $\bf{h}$.

In practice, however, measurements are corrupted by noise which usually leads to erroneous signs in some observed entries.
Many haplotype assembly methods deal with erroneous measurements  mostly based on the MEC defined as \cite{HapCUT}
\begin{equation} \label{MEC_formulation}
{\rm{MEC}}\left( {{\bf{R}},{\bf{h}}} \right) = \sum\limits_{i = 1}^m {\min \left( {hd\left( {{{\bf{r}}_i},{\bf{h}}} \right),hd\left( {{{\bf{r}}_i}, - {\bf{h}}} \right)} \right)},
\end{equation}
where the Hamming distance $hd(\cdot,\cdot)$ calculates the number of mismatch entries using
\begin{equation} \label{Hamming_distance}
hd\left( {{{\bf{r}}_i},{\bf{h}}} \right) = \sum\limits_{\left. j \right|\left( {i,j} \right) \in \Omega }^{} {d\left( {{r_{ij}},{h_j}} \right)},
\end{equation}
with   $d(\cdot,\cdot)$ being $0$ for equal inputs, and $1$ otherwise.

For optimal  solution of (\ref{MEC_formulation}), which is an NP-hard problem, some heuristic methods have already  been addressed 
\cite{HapCUT,Puljiz_2016,hashemi2018sparse}. Apart from MEC approaches, some other methods are developed based on the fact that the rank of $\overline{\bf{R}}$  ought to be 1. In \cite{Changxiao_Cai},  matrix factorization is used in the minimization problem,
\begin{equation} \label{Frobenius_factorization}
\mathop {\min }\limits_{{\bf{u}},{\bf{v}}}  \left\| {{{\rm{P}}_\Omega }\left( {\bf{R}} \right) - {{\rm{P}}_\Omega }\left( {{\bf{u}}_{m\times1}{{\bf{v}}_{n\times1}^T}} \right)} \right\|_F^2,
\end{equation}
where ${{\rm{P}}_\Omega }(\cdot)$ is a sampling operator  defined as
\begin{equation} \label{P_Omega}
{{\rm{P}}_\Omega }({\bf{Q}})=\left\{ {\begin{array}{*{20}{c}}
{{{\rm{P}}_\Omega }({q_{ij})} = {q_{ij}} \,\,\,\,\,\,\left( {i,j} \right) \in \Omega }\\
{{{\rm{P}}_\Omega }({q_{ij})} =  0 \,\,\,\,\,\,\left( {i,j} \right) \notin \Omega }
\end{array}} \right.
\end{equation}
and $\|\cdot\|_F$ shows the Frobenius norm.
Then,  $\bf{h}$ is calculated by applying the sign function over $\bf{v}$. The nonconvex problem of (\ref{Frobenius_factorization}) can be solved using the alternating minimization approach. For fast matrix completion, QR decomposition may also be applied \cite{liu2018fast}.

From the low-rank matrix completion point of view, haplotype estimation can be performed using some other approaches as follows.
Based on \cite{Candes_Recht}, it is possible to complete ${{\rm{P}}_\Omega }\left( {\bf{R}} \right)$ using the following optimization problem,

\begin{equation} \label{Nuclear_norm}
\mathop {\min }\limits_{\bf{X}} \,\,\,\left\| {{{\rm{P}}_\Omega }\left( {\bf{R}} \right) - {{\rm{P}}_\Omega }\left( {\bf{X}} \right)} \right\|_F^2 + \lambda {\left\| {\bf{X}} \right\|_*},
\end{equation}
where ${\left\| \cdot \right\|_*}$ shows the nuclear norm and $\lambda$ is a regularization factor. Afterwards, we can estimate $\bf{h}$ by applying the sign function over the right singular vector corresponding to the largest singular value.
In addition, minimizing the cost function $\left\| {{{\rm{P}}_\Omega }\left( {\bf{R}} \right) - {{\rm{P}}_\Omega }\left( {\bf{X}} \right)} \right\|_F^2$ over the manifold of rank-one matrices \cite{Bart} or Grassmann manifold \cite{Keshavan_Few_Entries} will result in completion of ${{\rm{P}}_\Omega }\left( {\bf{R}} \right)$ and estimating $\bf{h}$.

In this paper, we propose a new error correction mechanism over the manifold of rank one matrices to solve the noisy HAP. This approach benefits the underlying structure of  the HAP,  which seeks for a rank one matrix with the entries of $\pm 1$. As a result, unlike common matrix completion methods, we propose a Hamming distance cost function over the manifold of rank one matrices to complete the desired matrix. We next present a surrogate for the nondifferentiable structure of this cost function and analyze its convergence behaviour. In this way, as opposed to the existing MEC based algorithms, we will be able to derive the performance guarantees for our optimization problem.
Simulation results confirm that this method is more reliable for solving noisy HAPs.

The paper is organized as follows. In Section \ref{sec:Preliminary} some required concepts of manifold optimization are presented. Section \ref{Main_results} is devoted to our main result. Simulation results are illustrated in Section \ref{Simulation} and Section \ref{section.conclusion} concludes the paper.


\section{Preliminaries}\label{sec:Preliminary}
As mentioned, a HAP can be modeled as a rank-one matrix completion problem. On the other hand, the set of real valued rank-one matrices can be considered as a smooth manifold \cite{Bart}. To optimize a differentiable function over such a manifold, we first remind some concepts for optimization over manifolds.

\begin{theorem} \cite{Bart} \label{manifold_CRM}
The set of all $m\times n$ real valued matrices of rank $1$ is a smooth manifold, $\mathcal{M}^{(1)}$, whose tangent space at point ${\bf{X}}\in\mathcal{M}^{(1)}$ is  defined as

\begin{equation} \label{tangent_space_CRM}
\resizebox{.99 \hsize}{!}{$
\begin{array}{l}
{T_{\bf{X}}}{{\mathcal M}^{(1)}} = \left\{ {\left[ {{\bf{U}}\,\,{{\bf{U}}_ \bot }} \right]\left[ {\begin{array}{*{20}{c}}
{{\mathbb{R}}}&{{\mathbb{R}^{1 \times \left( {n - r} \right)}}}\\
{{\mathbb{R}^{\left( {m - 1} \right) \times 1}}}&{{{\bf{0}}_{\left( {m - 1} \right) \times \left( {n - 1} \right)}}}
\end{array}} \right]{{\left[ {{\bf{V}}\,\,{{\bf{V}}_ \bot }} \right]}^T}} \right\}\\
\,\,\,\,\,\,\,\,\,\,\,\,\, = \left\{ {{\bf{UM}}{{\bf{V}}^T} + {{\bf{U}}_p}{{\bf{V}}^T} + {\bf{UV}}_p^T:} \right.{\bf{M}} \in {\mathbb{R}},\\
\,\,\,\,\,\,\,\,\,\,\,\,\,\,\,\,\,\,\,\,\,\,\,\,{{\bf{U}}_p} \in {\mathbb{R}^{m \times 1}},{\bf{U}}_p^T{\bf{U}} = {\bf{0}},{{\bf{V}}_p} \in {\mathbb{R}^{n \times 1}},{\bf{V}}_p^T{\bf{V}} = {\bf{0}},
\end{array} $}
\end{equation}
where ${\bf{U\Sigma V}}^T$ is the singular value decomposition (SVD) of ${\bf{X}}$.
\end{theorem}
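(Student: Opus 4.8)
The plan is to establish that $\mathcal{M}^{(1)}$ is a smooth embedded submanifold of $\mathbb{R}^{m\times n}$ and then compute its tangent space explicitly. First I would exhibit a local parametrization (or equivalently a local defining equation) around an arbitrary point ${\bf X}\in\mathcal{M}^{(1)}$. The cleanest route is to fix the thin SVD ${\bf X}={\bf U}\Sigma{\bf V}^T$ with ${\bf U}\in\mathbb{R}^{m\times 1}$, ${\bf V}\in\mathbb{R}^{n\times 1}$ having orthonormal columns and $\Sigma\in\mathbb{R}$ nonzero, and then use the block decomposition with respect to the orthogonal complements $[{\bf U}\ {\bf U}_\bot]$ and $[{\bf V}\ {\bf V}_\bot]$: any matrix ${\bf Y}$ near ${\bf X}$ can be written as $[{\bf U}\ {\bf U}_\bot]\left[\begin{smallmatrix}A & B\\ C & D\end{smallmatrix}\right][{\bf V}\ {\bf V}_\bot]^T$ with $A$ scalar. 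In these coordinates the rank-one condition, when $A$ is invertible, is equivalent to the generalized Schur complement vanishing, $D - C A^{-1} B = {\bf 0}$, which exhibits $\mathcal{M}^{(1)}$ locally as the graph of the smooth map $(A,B,C)\mapsto D = CA^{-1}B$. This immediately gives smoothness and the dimension $m+n-1$.

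Next I would differentiate the graph description to read off the tangent space. A curve $t\mapsto {\bf X}(t)$ in $\mathcal{M}^{(1)}$ through ${\bf X}$ at $t=0$ corresponds, in the block coordinates, to $A(t),B(t),C(t)$ free (with $A(0)=\Sigma$, $B(0)={\bf 0}$, $C(0)={\bf 0}$) and $D(t)=C(t)A(t)^{-1}B(t)$; since $B(0)=C(0)={\bf 0}$, the product rule gives $D'(0)={\bf 0}$. Hence a tangent vector has block form $\left[\begin{smallmatrix}\dot A & \dot B\\ \dot C & {\bf 0}\end{smallmatrix}\right]$ with $\dot A\in\mathbb{R}$, $\dot B\in\mathbb{R}^{1\times(n-1)}$, $\dot C\in\mathbb{R}^{(m-1)\times 1}$ arbitrary, which is exactly the first displayed description of $T_{\bf X}\mathcal{M}^{(1)}$ in \eqref{tangent_space_CRM}. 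To get the second, equivalent description I would expand the block matrix: $[{\bf U}\ {\bf U}_\bot]\left[\begin{smallmatrix}\dot A & \dot B\\ \dot C & {\bf 0}\end{smallmatrix}\right][{\bf V}\ {\bf V}_\bot]^T = {\bf U}\dot A{\bf V}^T + {\bf U}(\dot B{\bf V}_\bot^T) + ({\bf U}_\bot\dot C){\bf V}^T$, and then set ${\bf M}=\dot A$, ${\bf V}_p^T = \dot B{\bf V}_\bot^T$ (so ${\bf V}_p = {\bf V}_\bot\dot B^T$, which satisfies ${\bf V}_p^T{\bf V}={\bf 0}$ since ${\bf V}_\bot^T{\bf V}={\bf 0}$), and ${\bf U}_p = {\bf U}_\bot\dot C$ (so ${\bf U}_p^T{\bf U}={\bf 0}$). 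Conversely any such ${\bf U}_p,{\bf V}_p$ lie in the column spans of ${\bf U}_\bot,{\bf V}_\bot$ respectively, so the two sets coincide; I would also do a quick dimension count ($1+(n-1)+(m-1)=m+n-1$) to confirm nothing is lost.

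The main obstacle — really the only nontrivial point — is verifying smoothness of $\mathcal{M}^{(1)}$ carefully, i.e. that the graph chart above actually covers a neighborhood of ${\bf X}$ in $\mathcal{M}^{(1)}$ and that the transition maps between such charts (centered at different base points, or the same point with a different choice of SVD when $\Sigma$ is degenerate in sign) are smooth. One must check that for ${\bf Y}$ sufficiently close to ${\bf X}$ the scalar block $A$ is automatically nonzero (true by continuity since $A({\bf X})=\Sigma\neq 0$) so the Schur-complement reduction is valid, and that rank exactly $1$ — not rank $0$ — is captured; rank $0$ would force $A=0$, which is excluded near ${\bf X}$. Since this structural result is quoted from \cite{Bart}, I would state it and give the parametrization, then spend the bulk of the argument on the explicit tangent-space computation, which is the part actually used in the sequel. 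Everything else is routine linear algebra: orthogonal complements, block multiplication, and the product rule.
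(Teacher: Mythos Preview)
The paper does not prove this theorem at all: it is stated in the Preliminaries section and simply attributed to \cite{Bart} (Vandereycken's paper on low-rank matrix completion via Riemannian optimization), with no argument given. So there is nothing in the paper to compare your proposal against.

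That said, your outline is correct and is essentially the standard argument, specialized from rank $k$ to rank $1$: writing a nearby matrix in block form relative to $[{\bf U}\ {\bf U}_\bot]$ and $[{\bf V}\ {\bf V}_\bot]$, using invertibility of the $(1,1)$ block near ${\bf X}$ to reduce the rank constraint to the Schur-complement equation $D=CA^{-1}B$, and reading the tangent space off the graph parametrization. This is exactly the line of reasoning in \cite{Bart} (and in standard references such as Helmke--Moore or Lee), so if you were to supply a proof where the paper omits one, yours would match the cited source. One cosmetic point: in the statement the top-right block is written as $\mathbb{R}^{1\times(n-r)}$; for $\mathcal{M}^{(1)}$ this $r$ is of course $1$, matching your $\dot B\in\mathbb{R}^{1\times(n-1)}$.
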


Convergence of manifold optimization schemes is evaluated by a metric on the manifold. Inspired from the Euclidean space $\mathbb{R}^{m\times n}$, the metric on the manifold of rank-one matrices is defined by the inner product $\left<\xi , \eta\right>_{\bf{X}}={\rm{tr}}\left( {{\xi^T}\eta} \right)$, where the subscript $\bf{X}$ shows the restriction to the tangent space $T_{\bf{X}}\mathcal{M}^{(1)}$ and the tangent vectors $\xi,\eta\in T_{\bf{X}}\mathcal{M}^{(1)}$.
A smoothly varying inner product is Riemannian metric and the manifold endowed with this metric is called Riemannian. Also, the gradient of a function on such a manifold is given as follows.
\begin{defi} \label{gradient} \cite{Absil_Book}
Let $f$ be a scalar valued function over the Riemannian manifold $\mathcal{M}$ endowed with the inner product $\left<\cdot , \cdot\right>_{\bf{X}}$. Then, the gradient of $f$ at point ${\bf{X}}\in\mathcal{M}$ denoted by ${\rm{grad}}f({\bf{X}})$ is the unique element of the tangent space  $T_{\bf{X}}\mathcal{M}$ satisfying
\begin {equation}
\left<{\rm{grad}}f({\bf{X}}),\xi\right>_{\bf{X}}={\rm{D}}f({\bf{X}})[\xi] \,\,\,\,\,\,  \forall \xi \in T_{\bf{X}}\mathcal{M},
\end {equation}
where ${\rm{D}}f$ denotes the directional derivative acting on the tangent vector $\xi$.
\end{defi}

By iteratively solving an optimization problem over the manifold $\mathcal{M}$, we may find a point on $T_{\bf{X}}\mathcal{M}$ which would not necessarily belong to the manifold. To bring this point back to the manifold, the retraction function can be utilized.

\begin{defi}\cite{Absil_Book}\label{retraction_defi}
Let $T\mathcal{M}: = \mathop  \cup \limits_{{\bf{X}} \in {{\mathcal M}}} \left\{ {\bf{X}} \right\} \times {T_{\bf{x}}}{{\mathcal M}}$ be the tangent bundle and $R:T\mathcal{M}\to\mathcal{M}$ be a smooth mapping whose restriction to $T_{\bf{X}}\mathcal{M}$ is $R_{\bf{X}}$. Then, $R$ is a retraction on the manifold $\mathcal{M}$, if,\\
$(1)$ $R_{\bf{X}}({\bf{0}}_{\bf{X}}) = {\bf{X}}$, where, ${\bf{0}}_{\bf{X}}$ is the zero element of $T_{\bf{X}}\mathcal{M}$,
\\$(2)$ ${\rm{D}}R_{\bf{X}}({\bf{0}}_{\bf{X}}) = {\rm{id}}_{T_{\bf{X}}\mathcal{M}}$, where, ${\rm{id}}_{T_{\bf{X}}\mathcal{M}}$ is the identity mapping on ${T_{\bf{X}}\mathcal{M}}$.
\end{defi}
 More clearly, we  find the result of $R_{\bf{X}}(\xi)$ by first calculating  ${\bf{Y}}={\bf{X}}+\xi$ where ${\bf{X}}\in\mathcal{M}$ and $\xi\in T_{\bf{X}}\mathcal{M}$,  and next applying the retraction mapping on ${\bf{Y}}$.
It has been shown that the retraction $R_{\bf{X}}(\xi)$ to the manifold of rank-one matrices is equivalent to taking the SVD of  ${\bf{Y}}$  and then making all the singular values equal to zero except the largest one  \cite{Absil_Paper}.

Another concept to note  is the gradient descent algorithm on Riemannian manifolds which is given in Alg. 1 \cite{Absil_Book}. As seen, Steps 1 and 2 concern with the search direction and convergence evaluation. Step 3, known as Armijo backtracking, guarantees a sequence of points which constitutes a descent direction \cite{Absil_Book}. Step 4 performs retraction on the manifold.

\begin{center} \label{Line_Search_Absil}
 \begin{tabular}{|c|}
 \hline
\hspace{-.2cm}\textbf{Alg. 1}: Gradient descent method on a Riemannian manifold. \\
 \hline
\hspace{-.1cm}\textbf{Requirements}: Differentiable cost function $f$, Manifold $\mathcal{M}$,\\ \hspace{-.6cm}inner product $\left<\cdot,\cdot\right>$, Initial matrix ${\bf{X}}_0 \in \mathcal{M}$, Retraction \\ function  $R$, Scalars $\bar{\alpha}>0, \,  \, \beta,\sigma\in(0,1)$, tolerance $\tau>0$.\\
\hspace{-5.7cm}{\bf{for}}  $i=0,1,2,...$  {\bf{do}}\\
\hspace{-.4cm}\textbf{Step 1}: \,\,\,\, Set $\xi$ as the negative direction of the gradient, \\
 \hspace{-2.6cm}$\xi_i:=-{\rm{grad}}f({\bf{X}}_i)$\\
\hspace{-3.8cm}\textbf{Step 2}:\,\,\,\, Convergence evaluation,\\
\hspace{-2cm}\,\,\,\, \textbf{if} $\left\|\xi_i\right\| < \tau$, \textbf{then break}\\
\hspace{-3.00cm}\textbf{Step 3}:\,\,\,\, Find the smallest $m$ satisfying\\
\,\,\,\,\,\,\,\, $f({\bf{X}}_i)-f(R_{{\bf{X}}_i}(\bar{\alpha}\beta^m\xi_i))\geq \sigma\bar{\alpha}\beta^m \left<\xi_i,\xi_i\right>_{{\bf{X}}_i}$\\
\hspace{-3.2cm}\textbf{Step 4}:\,\,\,\,\,\,\,\,\, Find the modified point as \\
\,\,\,\,\,\,\,\,\,\, ${\bf{X}}_{i+1}:=R_{{\bf{X}}_i}(\bar{\alpha}\beta^m\xi_i))$\\
\hline
\end{tabular}
\end{center}
\section {Main result} \label{Main_results}
First, using an example for the noisy HAP, we show that minimizing the cost function $\left\| {{{\rm{P}}_\Omega }\left( {\bf{R}} \right) - {{\rm{P}}_\Omega }\left( {\bf{X}} \right)} \right\|_F^2$ and calculating the haplotype by applying the sign function over the right singular vector corresponding to the largest singular value would not lead to the desired result. Then, we propose an optimization problem which can properly model the HAP.

\begin{example}\label{Fro_Norm_Inconsistent}
Substitute  ${\bf{h}} = \left[ {\begin{array}{*{20}{c}}{1}&{-1}&1&{-1}&{-1}\end{array}} \right]^T$ and ${\bf{c}} = \left[ {\begin{array}{*{20}{c}}{1}&
{1}&1\end{array}} \right]^T$ in $\rm{(}$\ref{Factorization_of_Main_Matrix}$\rm{)}$ to build up $\overline{\bf{R}}$ as
\begin {equation}
\overline{\bf{R}} = \left[ {\begin{array}{*{20}{c}}
{\begin{array}{*{20}{c}}
{  1}\\
{ 1}\\
{ 1}
\end{array}}&{\begin{array}{*{20}{c}}
{ - 1}\\
{ - 1}\\
{ - 1}
\end{array}}&{\begin{array}{*{20}{c}}
1\\
{  1}\\
1
\end{array}}&{\begin{array}{*{20}{c}}
{ - 1}\\
{ - 1}\\
{ - 1}
\end{array}}&{\begin{array}{*{20}{c}}
{ - 1}\\
{ - 1}\\
{ - 1}
\end{array}}
\end{array}} \right].\end{equation}
Next, let an erroneous sampling from $\overline{\bf{R}}$ be given as
\begin{equation}
{{\rm{P}}_\Omega }\left( {{{\overline {\bf{R}} }_E}} \right) = \left[ {\begin{array}{*{20}{c}}
{\begin{array}{*{20}{c}}
{  1}\\
{ -1}\\
{ 1}
\end{array}}&{\begin{array}{*{20}{c}}
{ 0}\\
{ - 1}\\
{ - 1}
\end{array}}&{\begin{array}{*{20}{c}}
1\\
{  1}\\
{-1}
\end{array}}&{\begin{array}{*{20}{c}}
{ - 1}\\
{ - 1}\\
{ - 1}
\end{array}}&{\begin{array}{*{20}{c}}
{  1}\\
{ - 1}\\
{ - 1}
\end{array}}
\end{array}} \right],\end{equation}
where subscript $E$ stands for erroneous observation. As seen, the only unobserved entry is ${\left\{{\left(1,2\right)}\right\}}$ and the set of erroneous observations is ${\left\{{\left(1,5\right)},{\left(2,1\right)},{\left(3,3\right)}\right\}}$.
Based on our simulations, minimization of $\left\| {{{\rm{P}}_\Omega }\left( \overline{\bf{R}}_E \right) - {{\rm{P}}_\Omega }\left( {\bf{X}} \right)} \right\|_F^2$ over the manifold of rank-one matrices could lead to either
\begin {equation}
{\overline {\bf{R}} _1} =\left[ {\begin{array}{*{20}{c}}
{0.2255}&{ - 0.763}&{0.2255}&{-0.8165}&{ - 0.3655}\\
{0.2955}&{ - 1}&{0.2955}&{ - 1.07}&{ - 0.{\rm{4790}}}\\
{0.2955}&{ - 1}&{0.2955}&{ - 1.07}&{ - 0.{\rm{4790}}}
\end{array}} \right]
\end{equation}
or
\begin {equation}
{\overline {\bf{R}} _2} = \left[ {\begin{array}{*{20}{c}}
{ - \varepsilon \gamma }&{{\rm{0}}{\rm{.9975}}\varepsilon }&{\varepsilon \gamma }&{{\rm{0}}{\rm{.9975}}\varepsilon }&{{\rm{0}}{\rm{.9975}}\varepsilon }\\
\gamma &{{\rm{ - 0}}{\rm{.9975}}}&{ - \gamma }&{{\rm{ - 0}}{\rm{.9975}}}&{{\rm{ - 0}}{\rm{.9975}}}\\
\gamma &{{\rm{ - 0}}{\rm{.9975}}}&{ - \gamma }&{{\rm{ - 0}}{\rm{.9975}}}&{{\rm{ - 0}}{\rm{.9975}}}
\end{array}} \right],
\end{equation}
where $\varepsilon$ and $\gamma$ are infinitesimal positive numbers. However, one can easily check that although for this example we get  \\$\left\| {{{\rm{P}}_\Omega }\left( \overline{\bf{R}}_E \right) - {{\rm{P}}_\Omega }\left( {\overline {\bf{R}} _1} \right)} \right\|_F\simeq\left\| {{{\rm{P}}_\Omega }\left( \overline{\bf{R}}_E \right) - {{\rm{P}}_\Omega }\left( {\overline {\bf{R}} _2} \right)} \right\|_F\simeq2.8284$, their haplotypes are estimated differently as ${\bf{h}}_1 = \left[ {\begin{array}{*{20}{c}}{ 1}&{-1}&{  1}&{-1}&{-1}\end{array}} \right]^T$ and ${\bf{h}}_2 = \left[ {\begin{array}{*{20}{c}}{ 1}&{-1}&{-1}&{ - 1}&{-1}\end{array}} \right]^T$, while only
${\bf{h}}_1$ is the correct answer. Note that haplotypes are estimated by applying the sign function over the right singular vector of the completed rank-one matrix.
\end{example}
For the above example, having the Hamming distance in mind, it is easy to verify that the following inequality holds,
\begin{equation}\label{Compare_Fro}
\begin{array}{l}
hd\left( {vec\left(  {{{\rm{P}}_\Omega }\left({sign\left( {{{\overline {\bf{R}} }_1}} \right)} \right)} \right),vec\left( {\left( {{{\rm{P}}_\Omega }\left( {{{\overline {\bf{R}} }_E}} \right)} \right)} \right)} \right) < \\
hd\left( {vec\left(  {{{\rm{P}}_\Omega }\left( {sign\left({{{\overline {\bf{R}} }_2}} \right)} \right)} \right),vec\left( {\left( {{{\rm{P}}_\Omega }\left( {{{\overline {\bf{R}} }_E}} \right)} \right)} \right)} \right),
\end{array}\end{equation}
where $vec\left(\cdot\right)$ vectorizes the input matrix. Equivalently, we have
\begin{equation}\label{Compare_Fro_Matrix}
\begin{array}{l}
{\left\| {{{{\rm{P}}_\Omega }\left({sign\left( {{{\overline {\bf{R}} }_1}} \right)} \right)}-{{\rm{P}}_\Omega }\left( {{{\overline {\bf{R}} }_E}} \right)} \right\|_0 <}\\
{\left\| {{{{\rm{P}}_\Omega }\left({sign\left( {{{\overline {\bf{R}} }_2}} \right)} \right)}-{{\rm{P}}_\Omega }\left( {{{\overline {\bf{R}} }_E}} \right)} \right\|_0 },
\end{array}
\end{equation}
where $\|\cdot\|_0$ shows the $l_0$-norm  which counts the number of nonzero entries of a matrix.

Therefore, we propose the following minimization cost function for noisy HAPs as
\begin{equation} \label {HAP_MMM_0}
\mathop {\min }\limits_{{\bf{X}} \in \mathcal{M}^{(1)}} {\left\| {{{\rm{P}}_\Omega }\left( {{{\overline {\bf{R}} }_E}} \right) - {{\rm{P}}_\Omega }\left( {{\rm{sign}}\left( {\bf{X}} \right)} \right)} \right\|_0}.
\end{equation}
    Using (\ref{HAP_MMM_0}), we can find a rank-one matrix  whose sign is as similar as possible to the observations. However,  since (\ref{HAP_MMM_0}) is nondifferentiable, we replace ${\rm{sign}}(x_{ij})$ by the differentiable function $\gamma_1{\tan ^{ - 1}}(\gamma_2 x_{ij})$ where the positive values $\gamma_1$ and $\gamma_2$ are selected to approximate the sign function.
Another difficulty with nondifferentiability of (\ref{HAP_MMM_0}) is the  discontinuity of $l_0$-norm  which is solved by replacing the $l_p$-norm $\|\cdot\|_p$ that is applied to the vector form of the matrix for $p>0$.
However, since $l_p$-norm is yet nondifferentiable in the case of
\begin{equation} \label {HAP_MMM_Condition}
{\left({{\rm{P}}_\Omega }\left( {{{\overline {\bf{R}} }_E}} \right) - {{\rm{P}}_\Omega }\left( {\gamma_1{\tan ^{ - 1}}\left( \gamma_2{\bf{X}} \right)} \right)\right)_{ij}}=0
\end{equation}
for any $\left( {i,j} \right) \in \Omega$, we limit the value of $\gamma_1$ to $\left(0,{\raise0.7ex\hbox{$2$} \!\mathord{\left/
 {\vphantom {2 \pi }}\right.\kern-\nulldelimiterspace}
\!\lower0.7ex\hbox{$\pi $}}\right)$ . In this way, ({\ref{HAP_MMM_0}}) is reformulated in a new problem as
\begin{equation} \label {HAP_MMM_p}
\mathop {\min }\limits_{{\bf{X}} \in \mathcal{M}^{(1)}} {f({\bf{X}})=\left\| {{{\rm{P}}_\Omega }\left( {{{\overline {\bf{R}} }_E}} \right) - {{\rm{P}}_\Omega }\left( {\gamma_1{\tan ^{ - 1}}\left( \gamma_2{\bf{X}} \right)} \right)} \right\|_p^p},
\end{equation}
where  ${\tan ^{ - 1}}(\cdot)$ acts elementwise.
 Although, for $0< p\leq1$ the sparsity is promoted in  (\ref{HAP_MMM_p}), here we choose $p$ equal or slightly greater than 1 to easily use the  triangle inequality to prove the convergence of Alg. 1. Simulation results will show that such a choice will lead to more accurate estimates of haplotypes. Now, we prove the convergence of Alg. 1 for (\ref{HAP_MMM_p}).
\begin {theorem} \label {Convergence}
Let positive values $\gamma_1<{\raise0.7ex\hbox{$2$} \!\mathord{\left/
 {\vphantom {2 \pi }}\right.\kern-\nulldelimiterspace}
\!\lower0.7ex\hbox{$\pi $}}$ and $\gamma_2$ be given. Then, by choosing the initial matrix ${\bf{X}}_0$ in such a way that $\|{\bf{X}}_0\|_F^2$ is bounded and $f({\bf{X}}_0)<|\Omega|$, Alg. 1 will converge.
\end{theorem}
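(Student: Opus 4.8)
The plan is to reduce the statement to the standard convergence analysis of the Riemannian gradient method with Armijo backtracking \cite{Absil_Book}: if the cost function is continuously differentiable on the manifold and the iterates remain in a compact subset of it, then ${\rm grad}\,f({\bf X}_i)\to{\bf 0}$ and every accumulation point of $\{{\bf X}_i\}$ is a critical point of $f$. So I would organize the proof around three verifications: (i) $f$ in $(\ref{HAP_MMM_p})$ is $C^1$ on $\mathcal M^{(1)}$; (ii) $f$ is bounded below on $\mathcal M^{(1)}$, so that the nonincreasing sequence $\{f({\bf X}_i)\}$ produced by Step 3 converges; and (iii) $\{{\bf X}_i\}_{i\ge 0}$ stays in a compact subset of $\mathcal M^{(1)}$.

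For (i) and (ii) the hypothesis $\gamma_1<2/\pi$ is what does the work. For every $(i,j)\in\Omega$ we have $(\overline{\bf R}_E)_{ij}=\pm1$ while $|\gamma_1\tan^{-1}(\gamma_2 X_{ij})|<\gamma_1\pi/2<1$, so each residual entry $(\overline{\bf R}_E)_{ij}-\gamma_1\tan^{-1}(\gamma_2 X_{ij})$ has magnitude at least $1-\gamma_1\pi/2>0$ and never reaches the non-differentiability at $t=0$ of $t\mapsto|t|^p$. Since $\mathrm P_\Omega$ is linear and $\tan^{-1}$ is smooth, $f$ is the restriction to $\mathcal M^{(1)}$ of a $C^1$ function on $\mathbb R^{m\times n}$, giving (i); the same bound gives $f({\bf X})\ge|\Omega|(1-\gamma_1\pi/2)^p>0$ for all ${\bf X}$, giving (ii). Together with the fact that the Armijo test of Step 3 is satisfiable for some finite $m$ (because $\xi_i=-{\rm grad}\,f({\bf X}_i)$ is a descent direction of a $C^1$ function), this already forces the objective values to converge.

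Item (iii) is the heart of the matter, since $\mathcal M^{(1)}$ is not closed and two degenerations of the trajectory must be excluded. The first is collapse toward the rank-zero (all-zero) matrix: reading $f$ as a function on all matrices of rank $\le1$, one has $f({\bf 0})=\|\mathrm P_\Omega(\overline{\bf R}_E)\|_p^p=|\Omega|$, so the hypothesis $f({\bf X}_0)<|\Omega|$ together with monotonicity $f({\bf X}_i)\le f({\bf X}_0)$ confines the sequence to $\{{\bf X}:f({\bf X})\le f({\bf X}_0)\}$, whose closure (by continuity of $f$) is bounded away from ${\bf 0}$ and hence, if also Frobenius-bounded, is a compact subset of $\mathcal M^{(1)}$. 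The second degeneration is escape to infinity in Frobenius norm, and this is where the assumption that $\|{\bf X}_0\|_F^2$ is bounded enters. Here one uses that the Euclidean gradient of $f$ is uniformly bounded on $\mathbb R^{m\times n}$: its $(i,j)$ component carries the saturating factor $\gamma_1\gamma_2/(1+\gamma_2^2X_{ij}^2)\le\gamma_1\gamma_2$, and the remaining factor $|(\overline{\bf R}_E)_{ij}-\gamma_1\tan^{-1}(\gamma_2X_{ij})|^{p-1}$ is bounded precisely because $p\ge1$; consequently $\|{\rm grad}\,f({\bf X}_i)\|$ and every step $\bar\alpha\beta^m\xi_i$ are controlled by a fixed constant. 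Combining this with the triangle inequality for $\|\cdot\|_p$ (the reason $p$ is taken $\ge1$), which yields a Lipschitz-type estimate for $f$ along the retraction curve $t\mapsto R_{{\bf X}_i}(t\xi_i)$, the Armijo decrease $f({\bf X}_i)-f({\bf X}_{i+1})\ge\sigma\bar\alpha\beta^m\|\xi_i\|^2$ telescopes and keeps the trajectory inside a bounded region determined by $\|{\bf X}_0\|_F$.

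With (i)--(iii) in hand the cited convergence theorem applies directly: ${\rm grad}\,f({\bf X}_i)\to{\bf 0}$, so either Step 2 eventually halts the algorithm or every accumulation point of $\{{\bf X}_i\}$ is a critical point of $f$ --- which is the intended meaning of ``Alg.~1 converges.'' The step I expect to be genuinely delicate is the Frobenius-norm boundedness within (iii): excluding the rank-zero collapse is clean, but ruling out escape to infinity really depends on the fine structure of $f$ (a bounded, slowly varying gradient because of the saturating $\tan^{-1}$, and the norm property of $\|\cdot\|_p$ for $p\ge1$), and a less careful treatment of the line search would leave a gap there.
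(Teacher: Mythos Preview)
Your overall strategy coincides with the paper's: invoke the Riemannian line-search convergence theorem (Theorem~4.3.1 in \cite{Absil_Book}) and then argue that the iterates remain in a compact subset of $\mathcal M^{(1)}$ by separately ruling out rank collapse to $\mathbf 0$ and escape to infinity. Your smoothness check~(i) and lower bound~(ii) are correct and in fact more explicit than anything the paper writes down; your rank-collapse argument in~(iii) is essentially identical to the paper's, which likewise uses $f(\mathbf 0)=|\Omega|$, monotonicity of $\{f(\mathbf X_i)\}$, and the triangle inequality for $\|\cdot\|_p$ (this is precisely where the paper uses $p\ge 1$).

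Where you diverge is the Frobenius-boundedness step, and there your sketch does not close. A uniform bound on the Euclidean (hence Riemannian) gradient only bounds each individual step length, and the Armijo telescope $\sum_i \sigma\bar\alpha\beta^{m_i}\|\xi_i\|^2<\infty$ does not force the step lengths $\bar\alpha\beta^{m_i}\|\xi_i\|$ to be summable; nothing you wrote prevents $\|\mathbf X_i\|_F$ from drifting to infinity along steps of roughly constant size. Since $\tan^{-1}$ saturates, $f$ itself is bounded on all of $\mathbb R^{m\times n}$, so its sublevel sets on $\mathcal M^{(1)}$ are genuinely unbounded and the ``fine structure'' you invoke must do real work that your proposal does not specify. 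The paper handles this differently: it passes to the regularized objective $f_{\mathrm m}=f+\mu\|\mathbf X\|_F^2$, whose sublevel set through $\mathbf X_0$ is automatically bounded, and then argues that for infinitesimally small $\mu$ the Alg.~1 iterates for $f$ and $f_{\mathrm m}$ coincide. That regularization device, not a gradient/telescoping estimate, is the paper's mechanism for boundedness.
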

\begin{proof}

Using Theorem 4.3.1 in \cite{Absil_Book}, we know that every limit  point of the infinite sequence $\{{\bf{X}}_i\}$ generated by Alg. 1 is a critical point of $f(\bf{X})$   in (\ref{HAP_MMM_p}). Accordingly, we need to prove that under the mentioned conditions of Theorem \ref{Convergence}, $\{{\bf{X}}_i\}$ owns a limit point and also belongs to the manifold of rank-one matrices, meaning that it converges. To do so, we show that this sequence lies in a compact set in which any sequence owns a convergent subsequence \cite{kreyszig1978introductory}. Consequently, due to the fact that $\{{\bf{X}}_i\}$ is a decreasing sequence, having a convergent subsequence guarantees its convergence.

To show the compactness of the set containing $\{{\bf{X}}_i\}$, we need to prove that the set is bounded and closed \cite{kreyszig1978introductory}. First, we prove the closedness of the set. Suppose that in our problem there exists a limit point for $\{{\bf{X}}_i\}$, which can be either a zero matrix, or a rank-one matrix belonging to the manifold of rank-one matrices. However, we ought to show that under the conditions of Theorem \ref{Convergence} the limit point of $\{{\bf{X}}_i\}$ is a rank-one matrix. To see this, it is enough to prove that the zero matrix is not a limit point of $\{{\bf{X}}_i\}$. By Step 3 of  Alg. 1, we know that  $\{{\bf{X}}_i\}$ is a decreasing sequence for $f({\bf{X}})$. Suppose that $\{{\bf{Y}}_j\}$ is a subsequence of  $\{{\bf{X}}_i\}$ converging to the zero matrix. Then, there is an integer $K$ such that $\forall j>K$, $\|{\bf{Y}}_j\|_p^p\leq\|{\bf{Y}}_j\|_F^2<2^{-j}$.
From the triangle inequality, we get $f({\bf{Y}}_j)\geq\mathop {\left\| {{{\rm{P}}_\Omega }\left( {{{\overline {\bf{R}} }_E}} \right)} \right\|_p^p}-{\left\| {{\rm{P}}_\Omega }\left( {\gamma_1{\tan ^{ - 1}}\left(\gamma_2{\bf{Y}}_j \right)} \right)\right\|_p^p}$, $\forall j>K$, and thus,  $\mathop {\lim }\limits_{j \to \infty } f({{\bf{Y}}_j}) \geq \left| \Omega  \right|$.
Now, we easily enforce  Alg. 1 to have an initial matrix $\{{\bf{X}}_0\}$ for which $f({\bf{X}}_0) < \left|\Omega\right|$. Hence, due to the fact that $\{{\bf{X}}_i\}$ is  a decreasing sequence for $f({\bf{X}})$, the zero matrix will not be the limit point. This ensures that $\{{\bf{X}}_i\}$ is in a closed set.

Now, we show that $\{{\bf{X}}_i\}$ stays in a bounded set. For this to happen, we modify (\ref{HAP_MMM_p}) to define $f_{\rm{m}}(\bf{X})$  by adding the regularization term $\mu \left\|{\bf{X}}\right\|_F^2$, where $\mu$ is a positive value. Since we enforced Alg. 1 to have the initial matrix ${\bf{X}}_0$ satisfying $f({\bf{X}}_0) < \left|\Omega\right|$,  we get $f_{\rm{m}}({\bf{X}}_0) < \left|\Omega\right|+\mu\left\|{\bf{X}}_0\right\|_F^2$. Also, let  ${\bf{X}}_0$ be a norm bounded initial matrix. Consequently, $f_{\rm{m}}({\bf{X}}_0)$ will be bounded to a value, say $z$.  Therefore, the decreasing sequence $\{{\bf{X}}_i\}$ generated by Alg. 1 for $f_{\rm{m}}$ satisfies $f({\bf{X}}_i)+\mu\left\|{\bf{X}}_i\right\|_F^2 < z$ showing that $\left\|{\bf{X}}_i\right\|_F^2$ is bounded. Considering the fact that by choosing an infinitesimal value for $\mu$, the sequence generated by Alg. 1 for either  $f({\bf{X}})$ or $f_{\rm{m}}({\bf{X}})$ is the same,   $\{{\bf{X}}_i\}$ stays in a bounded set.

From the above reasoning,   $\{{\bf{X}}_i\}$ stays in a compact set, meaning that the proposed problem in (\ref{HAP_MMM_p}) will converge.
\end {proof}
Note that although replacing ${\rm{sign}}\left( {\bf{X}} \right)$  by ${\gamma _1}{\tan ^{ - 1}}\left( {{\gamma _2}{\bf{X}}} \right)$  in general leads to slightly reducing the haplotype estimation accuracy,   in effect, it makes the optimization problem mathematically tractable and also practical.
\section {Simulation Results} \label{Simulation}
For comparison purposes two criteria are considered. First, we evaluate the completion performance of different methods using the Normalized Mean Square Error (NMSE) defined as
\begin{equation} \label{NMSE}
\begin{array}{l}
{\rm{NMSE}} = \frac{{\left\| {{\overline{\bf{R}}} - \widehat {{\overline{\bf{R}}}}} \right\|_F^2}}{{\left\| {\overline{\bf{R}}} \right\|_F^2}}
\end{array},
 \end{equation}
where $\widehat {{\overline{\bf{R}}}}$ is the estimation of the original matrix ${\overline{\bf{R}}}$.
The other criterion is the Hamming distance ($\rm{hd}$) of the original haplotype and its estimate. Note that since the original haplotype can be either $\bf{h}$ or $-\bf{h}$, we calculate the Hamming distance of the estimated haplotype for  both cases and choose the minimum one.
Our results are compared with the matrix completion based methods  \cite{Changxiao_Cai},\cite{Bart},\cite{Keshavan_Few_Entries},\cite{cai2010singular} and the most recent MEC based algorithm \cite{hashemi2018sparse}.
\begin{figure}[!ht]
\centering
\begin{minipage}[b]{0.365\textwidth}
    \includegraphics[width=\textwidth]{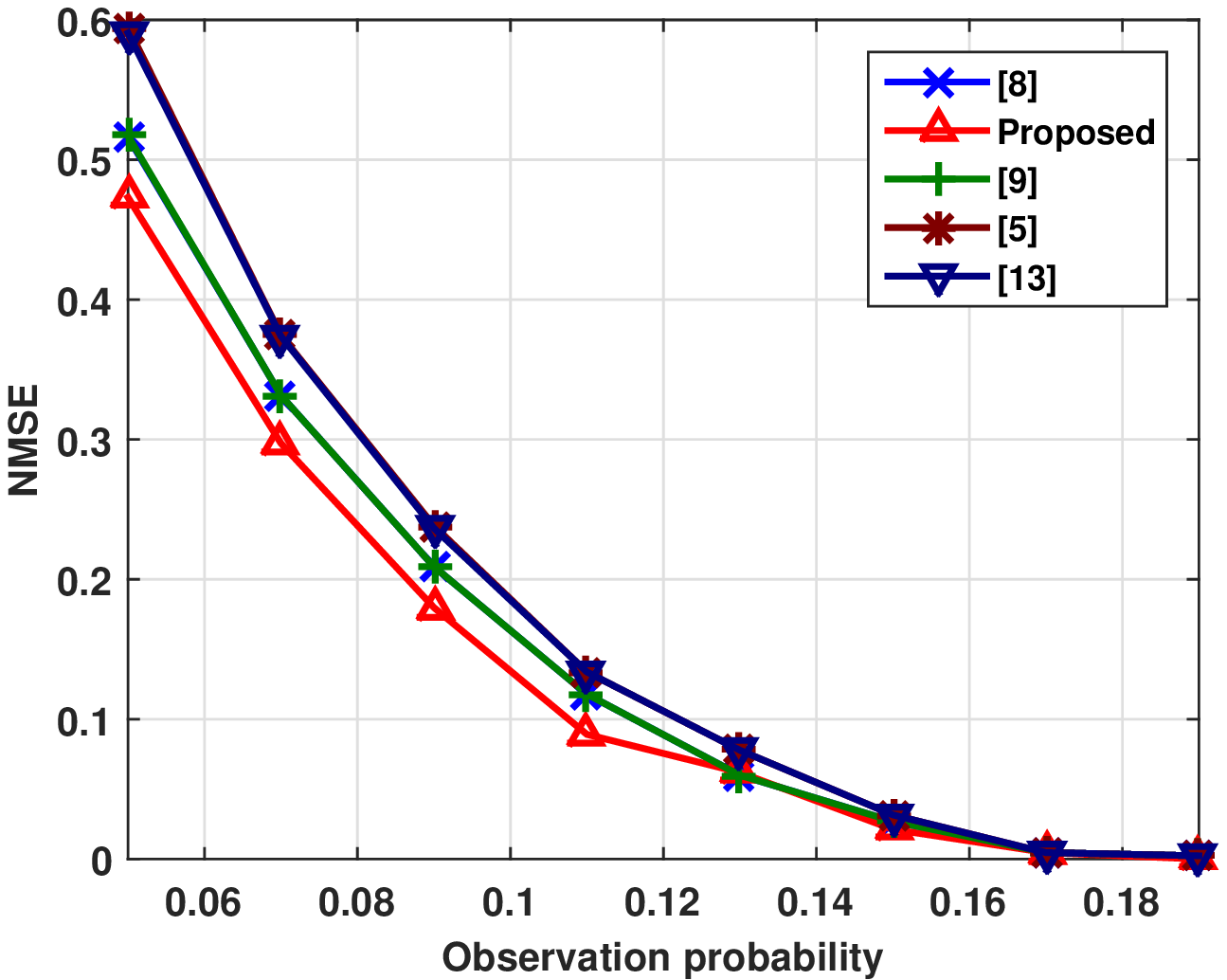}
  \caption{\label{NMSE_pd} NMSE  for matrix completion versus observation probability for $\frac{{\left| {{\Omega _E}} \right|}}{{\left| \Omega  \right|}} = 0.25$.}
\end{minipage}
\hfill
  \centering
\begin{minipage}[b]{0.365\textwidth}
    \includegraphics[width=\textwidth]{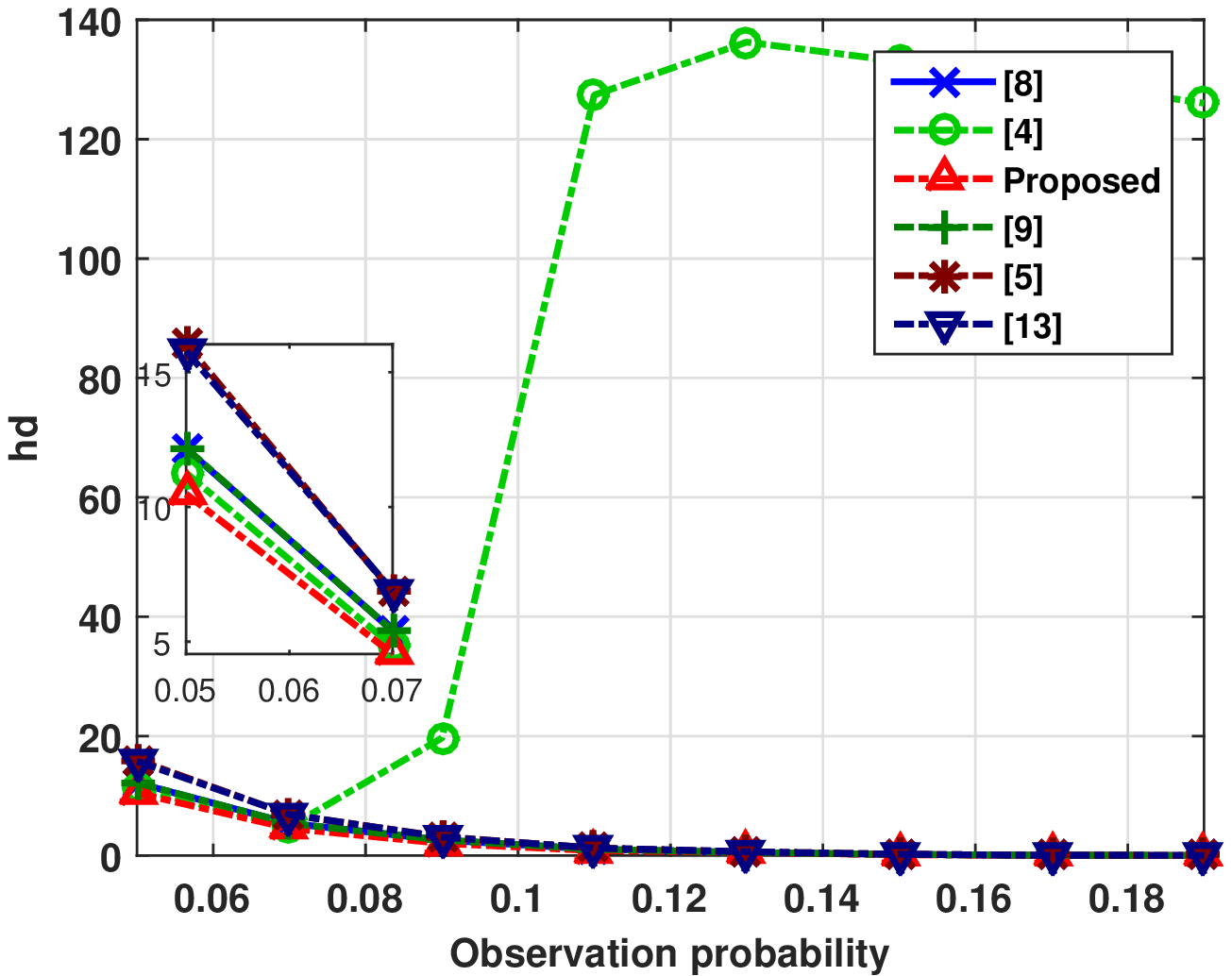}
  \caption{\label{hd_pd} Averaged Hamming distance between the estimated haplotype and original one versus observation probability for $\frac{{\left| {{\Omega _E}} \right|}}{{\left| \Omega  \right|}} = 0.25$.}
  \centering
\end{minipage}
\end{figure}
Simulations are performed with synthetic data. To generate ${\overline{\bf{R}}}_{m\times n}$ in (\ref{Factorization_of_Main_Matrix}), we randomly generate ${\bf{h}}_{n\times1}$ and ${\bf{c}}_{m\times1}$. The random set of observations $\Omega$ is produced while the probability of observation is $\rm{pd}$. Moreover, erroneous observations $\Omega_E$ are defined by changing the sign of some entries observed by $\Omega$, where $\Omega_E\subset\Omega$. Also, we have set $\gamma_1=0.5$,  $\gamma_2=2$, and $p=1.2$. For the initial matrix ${\bf{X}}_0$, we use the rank-one approximation of ${\rm{P}}_\Omega ({\overline{\bf{R}}}_E)$. Simulation results show that this initial matrix satisfies the conditions of Theorem \ref{Convergence}.
To obtain smooth curves, the results are averaged over 50 independent trials, each one with different sets of $\Omega$ and $\Omega_E$.
 Figs. \ref{NMSE_pd} and \ref{hd_pd}  are depicted  for $m=250$, $n=300$, and $0.05\leq $\rm{pd}$\leq 0.2$.  Moreover, ${{\left| \Omega_E  \right|} \mathord{\left/
 {\vphantom {{\left| \Omega  \right|} {\left| {{\Omega _E}} \right|}}} \right.
 \kern-\nulldelimiterspace} {\left| {{\Omega}} \right|}}$ is set to $0.25$, where $\left|\cdot\right|$ denotes the cardinality of a set.  One can observe that the proposed method outperforms the other  ones in estimating haplotypes by generating lower NMSEs and $\rm{hd}$s.
As seen in Fig. \ref{hd_pd}, the MEC based algorithm in \cite{hashemi2018sparse} can only compete with our proposed algorithm for the observation probabilities less than 0.07. This is in accordance with Theorem 2 of \cite{hashemi2018sparse} which states that for a nonzero observation error probability, the increment of observation probability would increase the haplotype estimation error.

\begin{figure}[!ht]
\centering
\begin{minipage}[b]{0.365\textwidth}
    \includegraphics[width=\textwidth]{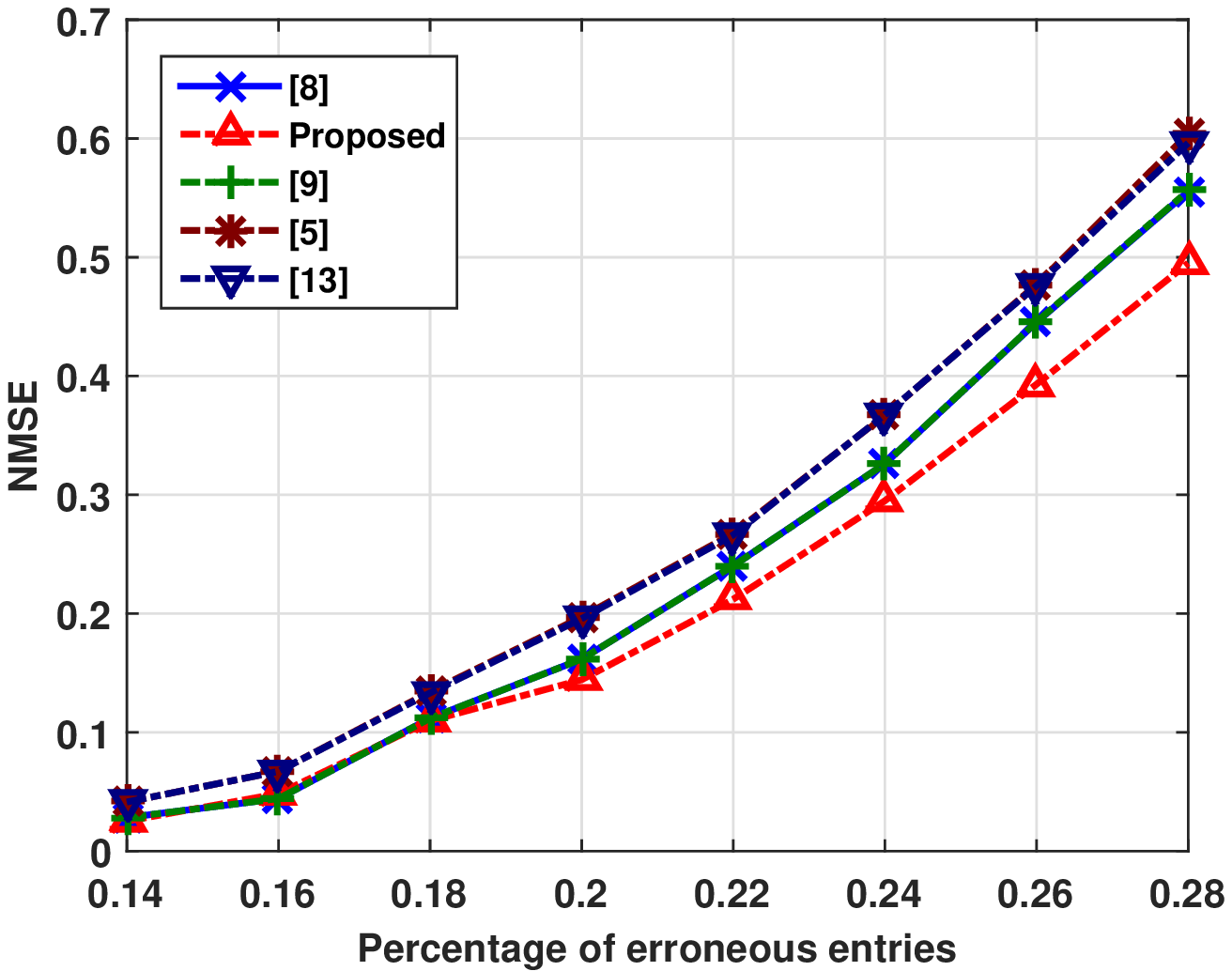}
  \caption{\label{NMSE_Erroneoussamples} NMSE  for matrix completion versus $\frac{{\left| {{\Omega _E}} \right|}}{{\left| \Omega  \right|}}$ for $\rm{pd}=0.07$.}
\end{minipage}
\hfill
  \centering
\begin{minipage}[b]{0.365\textwidth}
    \includegraphics[width=\textwidth]{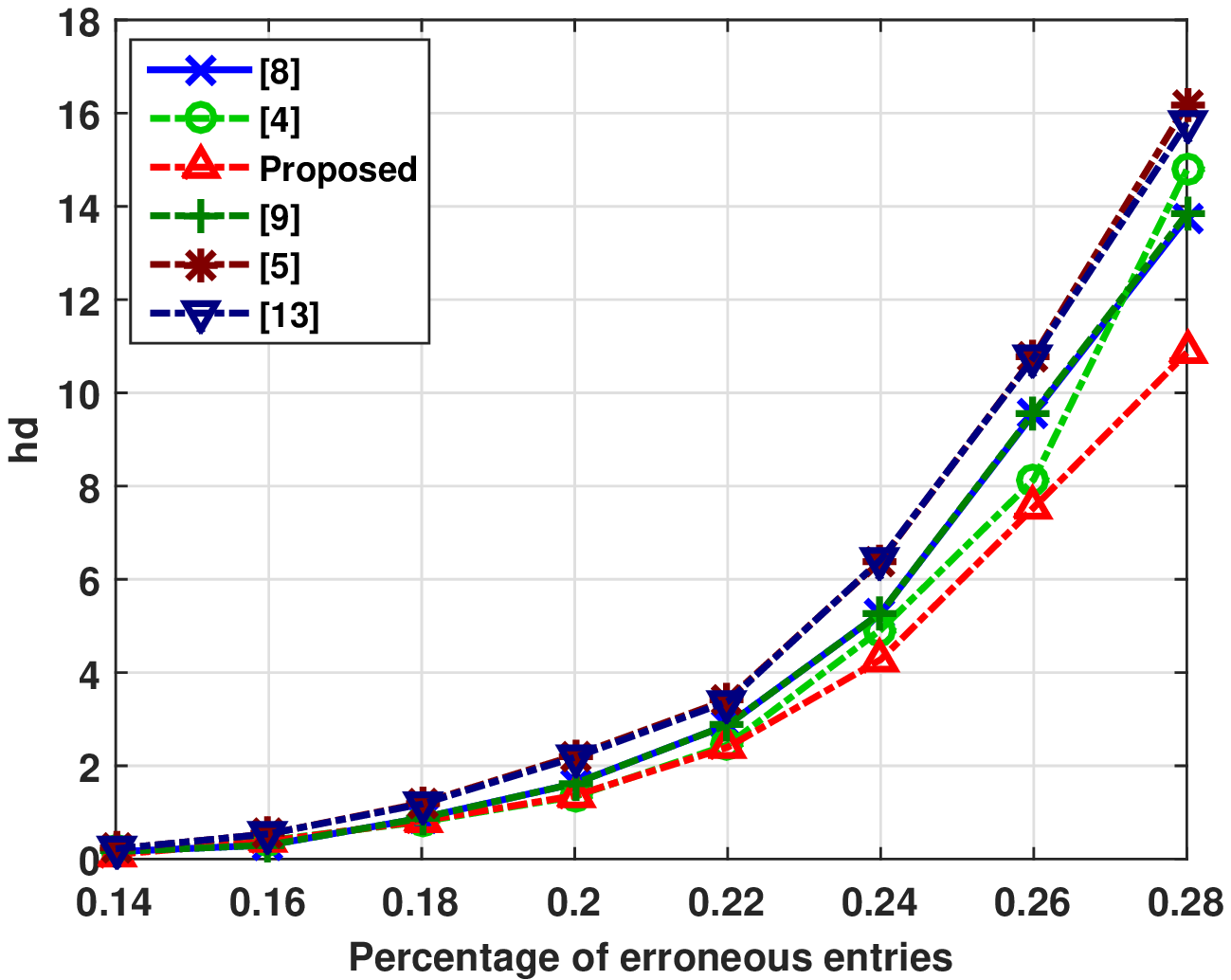}
  \caption{\label{hd_Erroneoussamples} Averaged Hamming distance between estimated haplotype and original one versus $\frac{{\left| {{\Omega _E}} \right|}}{{\left| \Omega  \right|}}$ for $\rm{pd}=0.07$.}
  \centering
\end{minipage}
\end{figure}
 To generate Figs. \ref{NMSE_Erroneoussamples} and \ref{hd_Erroneoussamples}, we change ${{\left| \Omega_E  \right|} \mathord{\left/
 {\vphantom {{\left| \Omega  \right|} {\left| {{\Omega}} \right|}}} \right.
 \kern-\nulldelimiterspace} {\left| {{\Omega}} \right|}}$ from $0.14$ to $0.28$ and proportionally  $p$ from $1.05$ to $1.2$, and set $\rm{pd}=0.07$.
Once more, these results demonstrate the outperformance of the proposed method.
As mentioned in Theorem 2, ${\gamma _1} < {2 \mathord{\left/
 {\vphantom {2 \pi }} \right. \kern-\nulldelimiterspace} \pi }$  is required for our convergence criterion. Moreover, both ${\gamma _1}$  and ${\gamma _2}$  can be selected arbitrarily, so that the function  ${\gamma _1}{\tan ^{ - 1}}\left( {{\gamma _2}{\bf{X}}} \right)$ can satisfyingly approximate ${\rm{sign}}\left( {\bf{X}} \right)$. However, based on our simulation results, for a large value of  ${\gamma _2}$  the runtime increases significantly, and  for a very small value of ${\gamma _2}$,  we will lose the haplotype estimation accuracy.
\section{Conclusion}\label{section.conclusion}
The haplotype assembly problem was investigated. A new matrix completion minimization problem  was proposed which benefits from an error correction mechanism over the manifold of rank-one matrices. The convergence of an iterative algorithm for this problem was theoretically proved. Simulation results demonstrated the validation of the proposed optimization problem by generating more accurate haplotype estimates compared to some recent related methods.



\bibliographystyle{ieeetr}
\bibliography{Hap_SPL_MMM_Manifold_97_08_06_Refrence}

\end{document}